\definecolor{webgreen}{rgb}{0,.5,0}
\definecolor{webbrown}{rgb}{.6,0,0}
\def\N{{\mathbb{N}}}
\def\Z{{\mathbb{Z}}}
\def\1{{\bf 1}}
\def\id{\operatorname{id}}
\newtheorem{teor}{Theorem}
\newtheorem{cor}{Corollary}
\newtheorem{prop}{Proposition}
\newtheorem{lem}{Lemma}
\begin{document}

\title{{\bf Counting invertible sums of squares modulo $n$ and a new generalization of Euler's totient function}}
\author{Catalina Calder\'{o}n, Jos\'{e} Mar\'{\i}a Grau, Antonio M. Oller-Marc\'{e}n, and L\'{a}szl\'{o} T\'{o}th}
%\date{}
\maketitle

\begin{abstract} In this paper we introduce and study a family $\Phi_k$ of arithmetic functions generalizing Euler's totient function.
These functions are given by the number of solutions to the equation $\gcd(x_1^2+\ldots +x_k^2, n)=1$ with $x_1,\ldots,x_k \in {\Z}/n{\Z}$
which, for $k=2,4$ and $8$ coincide, respectively, with the number of units in the rings of Gaussian integers, quaternions and octonions
over ${\Z}/n{\Z}$. We prove that $\Phi_k$ is multiplicative for every $k$, we obtain an explicit formula for $\Phi_k(n)$ in
terms of the prime-power decomposition of $n$ and derive an asymptotic formula for $\sum_{n\le x} \Phi_k(n)$. As a tool we
investigate the multiplicative arithmetic function that counts the number of solutions to $x_1^2+\ldots +x_k^2\equiv \lambda$ (mod $n$) for
$\lambda$ coprime to $n$, thus extending an old result that dealt only with the prime $n$ case.
\end{abstract}

{\sl 2010 Mathematics Subject Classification}:  11A25, 11N37

{\it Key Words and Phrases}: quadratic congruence, multiplicative
function, Euler's totient function, asymptotic formula

%%%%%%%%%%%%%%%%%%%%%%%%%%%%%%%%%%%%%%%%%%%%%%%%%%%%%%%%%%%%%%%%%%%%%%%%%%%%%%%%%%%%%%%%%%%%%%%%%%%%%%%%%%%%%%%%%%%%%%%%%%%%%%%%%%%%%%%%
%%%%%%%%%%%%%%%%%%%%%%%%%%%%%%%%%%%%%%%%%%%%%%%%%%%%%%%%%%%%%%%%%%%%%%%%%%%%%%%%%%%%%%%%%%%%%%%%%%%%%%%%%%%%%%%%%%%%%%%%%%%%%%%%%%%%%%%%

\section{Introduction}

Euler's totient function $\varphi$ is one of the most famous arithmetic functions used in number theory. Recall that $\varphi(n)$ is
defined as the number of positive integers less than or equal to $n$ that are coprime to $n$. Many generalizations and analogs of Euler's
function are known. See, for instance \cite{freed2012,gen1,dani,gen2,gen0,gen3} or the special chapter on this topic in \cite{gen4}. Among
the generalizations, the most significant is probably the Jordan's totient function $\mathbf{J}_k$ given by $\mathbf{J}_k(n)= n^k
\prod_{p\mid n}(1-p^{-k})$ ($n\in \N:=\{1,2,\ldots\}$). See \cite{and}, \cite[pp.\ 147--155]{jor}, \cite{van}.

In this paper we introduce and study a new generalization of $\varphi$. In particular, given $k \in \mathbb{N}$ we define
\begin{equation} \label{def_phi_k}
\Phi_k(n):=\textrm{card}\ \{(x_1,\ldots,x_k)\in(\mathbb{Z}/n\mathbb{Z})^k : \gcd(x_1^2+\ldots+x_k^2,n)=1\}.
\end{equation}

Clearly, $\Phi_1(n)=\varphi(n)$ and it is the order of the group of units of the ring $\mathbb{Z}/n\mathbb{Z}$. On the other hand, $\Phi_2(n)$
is the restriction to the set of positive integers of the Euler function defined on the Gaussian integers $\Z[i]$. Thus $\Phi_2(n)$,
denoted also by $\operatorname{GIphi}(n)$ in the literature, computes the number of Gaussian integers in a reduced residue system modulo $n$. See
\cite{Cross1983}. In the same way, $\Phi_4(n)$ and $\Phi_8(n)$ compute, respectively, the number of invertible quaternions and octonions
over $\mathbb{Z}/n\mathbb{Z}$.

In order to study the function $\Phi_k$ we need to focus on the functions
\begin{equation} \label{def_rho}
\rho_{k,\lambda}(n):=\textrm{card}\ \{(x_1,\dots,x_k)\in(\mathbb{Z}/n\mathbb{Z})^k : x_1^2+\ldots +x_k^2\equiv\lambda\pmod{n}\}
\end{equation}
which count the number of points on hyperspheres in $(\mathbb{Z}/n\mathbb{Z})^k$ and, in particular, in the case $\gcd(\lambda,n)=1$.
These functions were already studied in the case when $n$ is an odd prime by V.~H.~Lebesgue in 1837. In particular he proved the following
result (\cite[Chapter X]{jor2}).

\begin{prop}
\label{LEB}
Let $p$ be an odd prime and let $k,\lambda$ be positive integers with $p\nmid \lambda$. Put $t=(-1)^{(p-1)(k-1)/4}p^{(k-1)/2}$
and $\ell=(-1)^{k(p-1)/4}p^{(k-2)/2}$. Then
\begin{equation*}
\rho_{k,\lambda}(p)=\begin{cases} p^{k-1}+t, & \textrm{if $k$ is odd and $\lambda$ is a quadratic residue modulo $p$};\\ p^{k-1}-t,
& \textrm{if $k$ is odd and $\lambda$ is a not quadratic residue modulo $p$};\\ p^{k-1}-\ell, & \textrm{if $k$ is even}.
\end{cases}
\end{equation*}
\end{prop}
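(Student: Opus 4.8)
The plan is to linearize the congruence with additive characters and reduce everything to quadratic Gauss sums. Set $\zeta=e^{2\pi i/p}$ and use the orthogonality relation that $\frac1p\sum_{s=0}^{p-1}\zeta^{sm}$ equals $1$ when $p\mid m$ and $0$ otherwise. Applying this with $m=x_1^2+\cdots+x_k^2-\lambda$ and summing over all $(x_1,\dots,x_k)\in(\Z/p\Z)^k$ gives
\[
\rho_{k,\lambda}(p)=\frac1p\sum_{s=0}^{p-1}\zeta^{-s\lambda}\Bigl(\sum_{x=0}^{p-1}\zeta^{sx^2}\Bigr)^{\!k}.
\]
The term $s=0$ contributes $p^{k-1}$, the main term; the task is then to show that the remaining sum equals $\left(\frac{\lambda}{p}\right)t$ when $k$ is odd and $-\ell$ when $k$ is even, where $\left(\frac{\cdot}{p}\right)$ denotes the Legendre symbol.

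For $s\neq 0$ a standard scaling argument gives $\sum_{x}\zeta^{sx^2}=\left(\frac sp\right)g$, where $g=\sum_{x}\zeta^{x^2}$ is the quadratic Gauss sum. The only property of $g$ I need is the elementary identity $g^2=\left(\frac{-1}{p}\right)p=(-1)^{(p-1)/2}p$, which follows from $g\overline g=p$ together with $\overline g=\left(\frac{-1}{p}\right)g$; notably the final answers involve only even powers of $g$, so Gauss's delicate evaluation of the \emph{sign} of $g$ itself is never required. Substituting yields
\[
\rho_{k,\lambda}(p)=p^{k-1}+\frac{g^k}{p}\sum_{s=1}^{p-1}\left(\frac sp\right)^{\!k}\zeta^{-s\lambda}.
\]

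Now I split on the parity of $k$. If $k$ is even then $\left(\frac sp\right)^{k}=1$ and $\sum_{s=1}^{p-1}\zeta^{-s\lambda}=-1$ (since $p\nmid\lambda$), so $\rho_{k,\lambda}(p)=p^{k-1}-g^k/p$; writing $g^k=(g^2)^{k/2}=(-1)^{k(p-1)/4}p^{k/2}$ identifies $g^k/p$ with $\ell$. If $k$ is odd then $\left(\frac sp\right)^{k}=\left(\frac sp\right)$ and the inner sum becomes the Gauss sum of the quadratic character, $\sum_{s}\left(\frac sp\right)\zeta^{-s\lambda}=\left(\frac{-\lambda}{p}\right)g=\left(\frac{-1}{p}\right)\left(\frac{\lambda}{p}\right)g$, so that $\rho_{k,\lambda}(p)=p^{k-1}+\left(\frac{\lambda}{p}\right)\cdot\frac1p\left(\frac{-1}{p}\right)g^{k+1}$.

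The one delicate point, and the place I expect to need the most care, is to verify that $\frac1p\left(\frac{-1}{p}\right)g^{k+1}$ equals the stated $t$. The power of $p$ matches because $(k+1)/2-1=(k-1)/2$. For the sign, since $k+1$ is even I have $g^{k+1}=(-1)^{(p-1)(k+1)/4}p^{(k+1)/2}$, and combining the factor $(-1)^{(p-1)(k+1)/4}$ with $\left(\frac{-1}{p}\right)=(-1)^{(p-1)/2}$ produces $(-1)^{(p-1)(k+3)/4}$; because the exponents $(p-1)(k+3)/4$ and $(p-1)(k-1)/4$ differ by the even integer $p-1$, these two signs agree. This gives $\rho_{k,\lambda}(p)=p^{k-1}+\left(\frac{\lambda}{p}\right)t$, which is exactly the three cases of the statement once one reads $\left(\frac{\lambda}{p}\right)=1$ or $-1$ according as $\lambda$ is or is not a quadratic residue modulo $p$. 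I would close by checking that all exponents of $-1$ are genuine integers—$(k-1)/2$ and $(p-1)/2$ for odd $k$, and $k/2$ and $(p-1)/2$ for even $k$—so that $t$, $\ell$, and the identities above are unambiguous.
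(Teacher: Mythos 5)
Your argument is correct; I verified the key steps. The orthogonality step gives
\begin{equation*}
\rho_{k,\lambda}(p)=\frac1p\sum_{s=0}^{p-1}\zeta^{-s\lambda}\Bigl(\sum_{x=0}^{p-1}\zeta^{sx^2}\Bigr)^{k},
\end{equation*}
the scaling $\sum_x\zeta^{sx^2}=\left(\frac sp\right)g$ and the identity $g^2=(-1)^{(p-1)/2}p$ are indeed elementary (no sign determination of $g$ needed), the even case correctly reduces to $\sum_{s=1}^{p-1}\zeta^{-s\lambda}=-1$, and your sign bookkeeping in the odd case is right: the exponents $(p-1)(k+3)/4$ and $(p-1)(k-1)/4$ differ by the even integer $p-1$, so $\frac1p\left(\frac{-1}{p}\right)g^{k+1}=t$ exactly. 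The comparison with the paper, however, is of an unusual kind: the paper gives \emph{no proof} of Proposition \ref{LEB} at all, quoting it as a classical result of V.~H.~Lebesgue (1837) via Dickson's \emph{History of the Theory of Numbers}, and devoting its own technical effort instead to lifting the prime case to prime powers by elementary counting recursions (Lemmas \ref{LEM:REDP}, \ref{LEM:RED2} and \ref{LEM:REC}). So your character-sum argument is an independent, self-contained proof of the quoted statement, and it is essentially the Gauss-sum approach that the paper itself points to as an alternative, attributed to \cite{Toth14}, at the end of Section \ref{S2}. What your route buys: all three cases are handled uniformly, only $g^2$ is ever needed, and the same machinery (Gauss sums modulo $p^s$) extends to prime powers, which would subsume the paper's Lemma \ref{LEM:REDP} as well. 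What the paper's arrangement buys: by citing the prime case and doing the lifting combinatorially, its Section \ref{S2} remains entirely elementary, using no character theory whatsoever.
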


The paper is organized as follows. First of all, in Section \ref{S2} we study the values of $\rho_{k,\lambda}(n)$ in the case
$\gcd(\lambda,n)=1$, thus generalizing Lebesgue's work. In Section \ref{S3} we study the functions $\Phi_k$, in particular we prove
that they are multiplicative and we give a closed formula for $\Phi_k(n)$ in terms of the prime-power decomposition of $n$. Section
\ref{S4} is devoted to deduce an asymptotic formula for $\sum_{n\le x} \Phi_k(n)$. Finally,
we close our work suggesting some ideas that leave the door open for future work.

%%%%%%%%%%%%%%%%%%%%%%%%%%%%%%%%%%%%%%%%%%%%%%%%%%%%%%%%%%%%%%%%%%%%%%%%%%%%%%%%%%%%%%%%%%%%%%%%%%%%%%%%%%%%%%%%%%%%%%%%%%%%%%%%%%%%%%%%
%%%%%%%%%%%%%%%%%%%%%%%%%%%%%%%%%%%%%%%%%%%%%%%%%%%%%%%%%%%%%%%%%%%%%%%%%%%%%%%%%%%%%%%%%%%%%%%%%%%%%%%%%%%%%%%%%%%%%%%%%%%%%%%%%%%%%%%%

\section{Counting points on hyperspheres (mod $n$)}
\label{S2}

Due to the Chinese Remainder Theorem, the function $\rho_{k,\lambda}$, defined by \eqref{def_rho} is multiplicative; i.e.,
if $n=p_1^{r_1}\cdots p_m^{r_m}$,
then $\rho_{k,\lambda}(n)=\rho_{k,\lambda}(p_1^{r_1})\cdots\rho_{k,\lambda}(p_m^{r_m})$. Hence, we can restrict ourselves to the case when
$n=p^s$ is a prime-power. Moreover, since in this paper we focus on the case $\gcd(\lambda,n)=1$, we will always assume that $p\nmid \lambda$.
The following result will allow us to extend Lebesgue's work to the odd prime-power case.

\begin{lem}
\label{LEM:REDP}
Let $p$ be an odd prime and let $s\in \N$. If $p\nmid\lambda$, then
\begin{equation*}
\rho_{k,\lambda}(p^s)=p^{(s-1)(k-1)}\rho_{k,\lambda}(p).
\end{equation*}
\end{lem}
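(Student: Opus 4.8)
The plan is to establish the single-step recurrence $\rho_{k,\lambda}(p^s)=p^{k-1}\rho_{k,\lambda}(p^{s-1})$ for every $s\ge 2$ and then iterate it down to $s=1$. The mechanism is a Hensel-type lifting argument: I will count, for each solution modulo $p^{s-1}$, exactly how many solutions modulo $p^s$ reduce to it under the natural projection.

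First I would fix $s\ge 2$ and write $f(x_1,\dots,x_k)=x_1^2+\dots+x_k^2-\lambda$. Reduction mod $p^{s-1}$ sends any zero of $f$ modulo $p^s$ to a zero of $f$ modulo $p^{s-1}$, so it restricts to a map between the two solution sets; the aim is to show every solution modulo $p^{s-1}$ has exactly $p^{k-1}$ preimages. Fixing a solution $\mathbf{a}=(a_1,\dots,a_k)$ modulo $p^{s-1}$, I look for lifts of the shape $\mathbf{a}+p^{s-1}\mathbf{b}$ with $\mathbf{b}\in(\Z/p\Z)^k$. Because the quadratic terms in the expansion of $f$ pick up a factor $p^{2(s-1)}$, which is divisible by $p^s$ precisely when $s\ge 2$, one gets
$$f(\mathbf{a}+p^{s-1}\mathbf{b})\equiv f(\mathbf{a})+2p^{s-1}\sum_{i=1}^k a_i b_i \pmod{p^s}.$$
Dividing the condition $f(\mathbf{a}+p^{s-1}\mathbf{b})\equiv 0\pmod{p^s}$ by $p^{s-1}$ converts the lifting requirement into a single linear equation in $\mathbf{b}$ over $\Z/p\Z$, whose coefficients are the residues $2a_i\bmod p$.

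The key step, and the place where both hypotheses are used, is to verify that this linear equation is nontrivial and hence has exactly $p^{k-1}$ solutions. If every $a_i$ were $\equiv 0\pmod p$, then $f(\mathbf{a})\equiv-\lambda\pmod p$ would be a unit, contradicting $f(\mathbf{a})\equiv 0\pmod{p^{s-1}}$ (here $s\ge 2$ is essential); hence some $a_i$ is a unit modulo $p$, and since $p$ is odd the coefficient $2a_i$ is nonzero in $\Z/p\Z$. This is the only real obstacle, and it is exactly what distinguishes the present coprime case from the general one. With nonsingularity in hand, each solution modulo $p^{s-1}$ lifts to exactly $p^{k-1}$ solutions modulo $p^s$, so summing over all solutions modulo $p^{s-1}$ gives $\rho_{k,\lambda}(p^s)=p^{k-1}\rho_{k,\lambda}(p^{s-1})$. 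Iterating this identity (equivalently, an induction on $s$, with $s=1$ as the trivial base case) yields $\rho_{k,\lambda}(p^s)=p^{(s-1)(k-1)}\rho_{k,\lambda}(p)$, as required.
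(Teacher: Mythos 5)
Your proposal is correct and follows essentially the same route as the paper's own proof: a Hensel-type lifting step showing each solution modulo $p^{s-1}$ has exactly $p^{k-1}$ lifts, reduced to a nontrivial linear congruence in $k$ variables over $\Z/p\Z$ (nontrivial because $p\nmid\lambda$ forces some $a_i$ to be a unit, and $p$ is odd), followed by iteration. The only cosmetic difference is direction of indexing (you descend from $p^s$ to $p^{s-1}$ while the paper ascends from $p^s$ to $p^{s+1}$), and you spell out the nonvanishing of the linear form slightly more explicitly than the paper does.
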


\begin{proof}
It is easily seen that any solution to the congruence $x_1^2+\ldots +x_k^2\equiv \lambda\pmod{p^{s+1}}$ must be of the form
$(a_1+t_1p^s,\ldots,a_k+t_kp^s)$, where $0\leq t_1,\ldots, t_k\leq p-1$, for some $(a_1,\ldots,a_k)$ such that
$a_1^2+\ldots + a_k^2\equiv \lambda\pmod{p^s}$. Now, $(a_1+t_1p^s)^2+\ldots+(a_k+t_kp^s)^2\equiv \lambda\pmod{p^{s+1}}$ if
and only if $2a_1t_1+\ldots +2a_kt_k\equiv -K\pmod{p}$, where $K$ is such that $a_1^2+\ldots +a_k^2=Kp^s+\lambda$.
Since $a_i\not\equiv 0\pmod{p}$ for some $i\in\{1,\dots,k\}$, it follows that there are exactly $p^{k-1}$ possibilities for
$(t_1,\dots,t_k)$. We obtain that $\rho_{k,\lambda}(p^{s+1})= p^{k-1}\rho_{k,\lambda}(p^s)$, and the result follows inductively.
\end{proof}

If $p=2$ we have a similar result.

\begin{lem}
\label{LEM:RED2}
Let $s\geq 3$ and let $\lambda\in \N$ be odd. Then,
\begin{equation*}
\rho_{k,\lambda}(2^{s})=2^{(s-3)(k-1)}\rho_{k,\lambda}(8).
\end{equation*}
\end{lem}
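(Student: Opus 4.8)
The plan is to imitate the lifting argument of Lemma~\ref{LEM:REDP}, but starting the induction at $s=3$ rather than $s=1$, since the prime $2$ forces us to lose more information in the squaring step. The key identity is again the binomial expansion: any solution of $x_1^2+\cdots+x_k^2\equiv\lambda\pmod{2^{s+1}}$ (with $s\ge 3$) must reduce to a solution $(a_1,\ldots,a_k)$ of $x_1^2+\cdots+x_k^2\equiv\lambda\pmod{2^s}$, and conversely we count how many lifts $(a_1+t_12^{s-1},\ldots,a_k+t_k2^{s-1})$, with $0\le t_i\le 1$, satisfy the stronger congruence. The reason for using the increment $2^{s-1}$ rather than $2^s$ is that $(a_i+t_i2^s)^2\equiv a_i^2\pmod{2^{s+1}}$ automatically, so shifting by $2^s$ gives no new constraint; one must shift by $2^{s-1}$ to make the cross term $2a_it_i2^{s-1}=a_it_i2^s$ interact nontrivially modulo $2^{s+1}$.

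First I would fix a solution $(a_1,\ldots,a_k)$ modulo $2^s$ and write $a_1^2+\cdots+a_k^2=K2^s+\lambda$ for some integer $K$. Expanding $(a_i+t_i2^{s-1})^2=a_i^2+a_it_i2^s+t_i^2 2^{2s-2}$ and using $2s-2\ge s+1$ (valid precisely because $s\ge 3$), I would reduce the lifted congruence modulo $2^{s+1}$ to the linear condition
\begin{equation*}
a_1t_1+\cdots+a_kt_k\equiv -K\pmod{2},
\end{equation*}
a single equation over $\mathbb{F}_2$ in the unknowns $t_1,\ldots,t_k\in\{0,1\}$. Since $\lambda$ is odd, at least one $a_i$ is odd, so the coefficient vector $(a_1,\ldots,a_k)\bmod 2$ is nonzero; hence this affine equation has exactly $2^{k-1}$ solutions. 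This yields the one-step recurrence $\rho_{k,\lambda}(2^{s+1})=2^{k-1}\rho_{k,\lambda}(2^s)$ for every $s\ge 3$.

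Finally I would unwind the recurrence: iterating from the base case $s=3$ gives $\rho_{k,\lambda}(2^s)=2^{(s-3)(k-1)}\rho_{k,\lambda}(8)$ for all $s\ge 3$, which is the claimed formula. The main obstacle, and the only place where the argument genuinely differs from the odd-prime case, is the bookkeeping of $2$-adic valuations: one must confirm that the quadratic terms $t_i^2 2^{2s-2}$ and the doubled cross terms truly vanish modulo $2^{s+1}$, which is exactly why the threshold $s\ge 3$ (equivalently $2s-2\ge s+1$) appears and why the reduction bottoms out at $2^3=8$ rather than at $2$ or $4$. I would state explicitly that for $s=3$ the inequality $2s-2=4\ge 4=s+1$ holds with equality, so the term $t_i^2 2^{2s-2}$ is divisible by $2^{s+1}$ and drops out, validating the first step of the induction.
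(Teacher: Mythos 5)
There is a genuine gap: your lifting map is not a bijection, so your count does not compute $\rho_{k,\lambda}(2^{s+1})$. You reduce a solution modulo $2^{s+1}$ to a solution $(a_1,\ldots,a_k)$ modulo $2^s$, but then you lift by multiples of $2^{s-1}$: the points $a_i+t_i2^{s-1}$ with $t_i\in\{0,1\}$ are \emph{not} the residues modulo $2^{s+1}$ lying over $a_i$ (those are $a_i$ and $a_i+2^s$, and for those, as you yourself observe, the congruence modulo $2^{s+1}$ imposes no condition on the $t_i$, which is exactly why the odd-prime argument breaks down). With $a_i$ running over $[0,2^s)$ and $t_i\in\{0,1\}$, the values $a_i+t_i2^{s-1}$ miss the interval $[2^s+2^{s-1},2^{s+1})$ entirely and cover $[2^{s-1},2^s)$ twice. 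Concretely, take $k=1$, $s=3$, $\lambda=1$: the solutions of $x^2\equiv 1\pmod{16}$ are $x\in\{1,7,9,15\}$ and the solutions modulo $8$ are $a\in\{1,3,5,7\}$; your recipe produces the lift $7$ twice (from $a=7$, $t=0$ and from $a=3$, $t=1$, both of which satisfy your linear condition $at\equiv-K\pmod 2$), while it never produces $15$. The total $2^{k-1}\rho_{k,\lambda}(2^s)=4$ happens to equal $\rho_{1,1}(16)$, but the fact that the double counts and the misses cancel exactly is precisely what has to be proved, and your argument does not prove it.

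The paper's proof repairs exactly this point by descending one more level: it parametrizes residues modulo $2^{s+1}$ \emph{bijectively} as $a_i+t_i2^{s-1}$ with $a_i\in[0,2^{s-1})$ and $t_i\in\{0,1,2,3\}$, requires $(a_1,\ldots,a_k)$ to solve the congruence modulo $2^{s-1}$, writes $a_1^2+\cdots+a_k^2=L2^{s-1}+\lambda$, and shows that the lift condition is $2(a_1t_1+\cdots+a_kt_k)\equiv-L\pmod 4$, which forces $L$ to be even, i.e.\ forces $(a_1,\ldots,a_k)$ to solve the congruence modulo $2^s$. It then identifies the number of admissible base points in $[0,2^{s-1})^k$ as $\rho_{k,\lambda}(2^s)/2^k$ and counts $2\cdot 4^{k-1}$ digit vectors per base point, yielding $\rho_{k,\lambda}(2^{s+1})=2^{k-1}\rho_{k,\lambda}(2^s)$. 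Your linear-algebra step (one nonzero linear equation over $\mathbb{F}_2$, hence half of the digit vectors survive), your valuation bookkeeping ($2s-2\ge s+1$ for $s\ge 3$), and the final unwinding from the base case $s=3$ are all fine; what is missing is a correct bijective correspondence between base points with digits and solutions modulo $2^{s+1}$, and that is the heart of the lemma.
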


\begin{proof} In the case $p=2$ the proof of Lemma \ref{LEM:REDP} does not work since $2a_1t_1+\ldots +2a_kt_k\equiv -K \pmod{2}$ holds
only if $K$ is even. Therefore, we use that every solution of the congruence $x_1^2+\ldots+x_k^2\equiv \lambda \pmod{2^{s+1}}$ is of the
form $(a_1+t_12^{s-1},\ldots,a_k+t_k2^{s-1})$, where $0\leq t_1,\ldots, t_k\leq 3$, for some $(a_1,\ldots,a_k)$ satisfying
$a_1^2+\ldots +a_k^2\equiv \lambda \pmod{2^{s-1}}$, that is $a_1^2+\ldots +a_k^2= L 2^{s-1} +\lambda$ with an integer $L$. Now, taking
into account that $s\geq 3$, $(a_1+t_12^{s-1})^2+\ldots +(a_k+t_k2^{s-1})^2\equiv \lambda \pmod{2^{s+1}}$ if and only if
\begin{equation} \label{lemma_2_eq_1}
2(a_1t_1+\ldots + a_kt_k)\equiv -L \pmod{4}.
\end{equation}

Here the condition \eqref{lemma_2_eq_1} holds true if and only if $L$ is even, i.e.,
$a_1^2+\ldots +a_k^2\equiv \lambda \pmod{2^s}$. Hence we need the solutions $(a_1,\ldots,a_k)$ of the congruence (mod $2^s$), but only
those satisfying
\begin{equation} \label{lemma_2_eq_2}
0\leq a_1,\ldots, a_k< 2^{s-1}
\end{equation}

It is easy to see that their number is $\rho_{k,\lambda}(2^{s})/2^k$, since all
solutions of the congruence (mod $2^s$) are $(a_1+u_12^{s-1},\ldots, a_k+u_k2^{s-1})$ with $(a_1,\ldots,a_k)$ verifying \eqref{lemma_2_eq_2} and
$0\leq u_1,\ldots,u_k\leq 1$. Since $a_i$ must be odd for some $i\in\{1,\ldots,k\}$, for a fixed even $L$, \eqref{lemma_2_eq_1} has $2\cdot 4^{k-1}$
solutions $(t_1,\ldots,t_k)$. We deduce that $\rho_{k,\lambda}(2^{s+1})= 2\cdot 4^{k-1} \rho_{k,\lambda}(2^{s})/2^k = 2^{k-1}\rho_{k,\lambda}(2^s)$.
Now the result follows inductively on $s$.
\end{proof}

As we have just seen, unlike when $p$ is an odd prime, the recurrence is now based on $\rho_{k,\lambda}(2^3)$. Hence, the cases $s=1,2,3$; i.e.,
$n=2,4,8$, must be studied separately. In order to do so, the following general result will be useful.

\begin{lem}
\label{LEM:REC}
Let $k,\lambda$ and $n$ be positive integers. Then
\begin{equation*}
\rho_{k,\lambda}(n)=\sum_{\ell=0}^{n-1}\rho_{1,\ell}(n)\rho_{k-1,\lambda-\ell}(n).
\end{equation*}
\end{lem}

\begin{proof}
Let $(x_1,\dots,x_k) \in (\mathbb{Z}/n\mathbb{Z})^k$ be such that $x_1^2+\cdots+ x_k^2\equiv \lambda\pmod{n}$. Then, for some $\ell \in\{0,\ldots,n-1\}$ we
have that $x_1^2\equiv \ell \pmod{n}$ and $x_2^2+\ldots+ x_k^2\equiv \lambda-\ell \pmod{n}$ and hence the result.
\end{proof}

Now, given $k,n\in \N$ let us define the matrix $M(n)=\left(\rho_{1,i-j}(n)\right)_{0\leq i,j\leq n-1}$. If we consider the column vector
$R_{k}(n)=\left(\rho_{k,i}(n) \right)_{0\leq i\leq n-1}$, then Lemma \ref{LEM:REC} leads to the following recurrence relation:
$$R_{k}(n)=M(n)\cdot R_{k-1}(n).$$

In the following proposition we use this recurrence relation to compute $\rho_{k,\lambda}(2^s)$ for $s=1,2,3$ and odd $\lambda$.

\begin{lem}
\label{PROP2}
Let $k$ be a positive integer. Then
\begin{itemize}
\item[i)] $\rho_{k,1}(2)=2^{k-1}$,
\item[ii)] $\rho_{k,1}(4) = 4^{k-1}+2^{\frac{3 k}{2}-1} \sin \left(\frac{\pi  k}{4}\right)$,
\item[iii)] $\rho_{k,3}(4) = 4^{k-1}-2^{\frac{3 k}{2}-1} \sin \left(\frac{\pi  k}{4}\right)$,
\item[iv)] $\rho_{k,1}(8)=  2^{2 k-3} \left(2^k+2^{\frac{k}{2}+1} \sin \left(\frac{\pi  k}{4}\right)+2 \sin \left(\frac{1}{4} \pi(k+1)\right)-2 \cos \left(\frac{1}{4} \pi (3k+1)\right)\right)$,
\item[v)] $\rho_{k,3}(8)=2^{2 k-3} \left(2^k-2^{\frac{k}{2}+1} \sin \left(\frac{\pi  k}{4}\right)-2 \left(\cos \left(\frac{1}{4} \pi (k+1)\right)+\cos \left(\frac{3}{4} \pi  (k+1)\right)\right)\right)$,
\item[vi)] $\rho_{k,5}(8)=2^{2 k-3} \left(2^k+2^{\frac{k}{2}+1} \sin \left(\frac{\pi  k}{4}\right)-2 \sin \left(\frac{1}{4} \pi (k+1)\right)+2 \cos \left(\frac{1}{4}\pi (3k+1)\right)\right)$,
\item[vii)] $\rho_{k,7}(8)= 2^{2 k-3} \left(2^k-2^{\frac{k}{2}+1} \sin \left(\frac{\pi  k}{4}\right)-2 \sin \left(\frac{1}{4} (3 \pi  k+\pi)\right)+2 \cos \left(\frac{1}{4} \pi  (k+1)\right)\right)$.
\end{itemize}
\end{lem}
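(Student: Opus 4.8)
The plan is to run the recurrence $R_k(n)=M(n)R_{k-1}(n)$ of Lemma~\ref{LEM:REC} for the three fixed moduli $n=2,4,8$, exploiting the special structure of $M(n)$. The first observation is that $M(n)_{i,j}=\rho_{1,i-j}(n)$ depends only on $i-j$ modulo $n$, so $M(n)$ is a \emph{circulant} matrix, and its very first column is exactly $R_1(n)=(\rho_{1,0}(n),\ldots,\rho_{1,n-1}(n))^{T}$. Writing $e_0=(1,0,\ldots,0)^{T}$, this says $R_1(n)=M(n)e_0$, and since $R_k(n)=M(n)^{k-1}R_1(n)$ we obtain $R_k(n)=M(n)^k e_0$. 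In other words, $\rho_{k,\lambda}(n)$ is nothing but the $(\lambda,0)$-entry of $M(n)^k$, so the whole problem reduces to computing powers of a small circulant matrix.

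The second step is to diagonalize $M(n)$ by the discrete Fourier transform. Put $\omega=e^{2\pi i/n}$ and $c_m=\rho_{1,m}(n)$. A circulant matrix has the roots-of-unity vectors $(\omega^{j\ell})_{\ell}$ as eigenvectors, with eigenvalues $\lambda_j=\sum_{m=0}^{n-1}c_m\,\omega^{-jm}$ for $0\le j\le n-1$. Reading off the $(\lambda,0)$-entry from the spectral decomposition then gives the master formula
\[
\rho_{k,\lambda}(n)=\bigl(M(n)^k\bigr)_{\lambda,0}=\frac{1}{n}\sum_{j=0}^{n-1}\lambda_j^{\,k}\,\omega^{j\lambda},
\]
which is valid for every $k\ge 1$. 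Everything after this is a matter of computing the $c_m$ and simplifying.

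The third step records the squares modulo $n$. For $n=2$ one has $c_0=c_1=1$, whence $\lambda_0=2$, $\lambda_1=0$, and the formula collapses to $\rho_{k,1}(2)=\tfrac12(2^k)=2^{k-1}$, giving (i). For $n=4$ the counts are $(c_0,c_1,c_2,c_3)=(2,2,0,0)$, so $\lambda_0=4$, $\lambda_1=2-2i=2^{3/2}e^{-i\pi/4}$, $\lambda_2=0$, $\lambda_3=\overline{\lambda_1}=2+2i$; substituting into the master formula and pairing the two conjugate terms via $e^{i\alpha}+e^{-i\alpha}=2\cos\alpha$ together with $\cos(\tfrac{\pi}{2}-x)=\sin x$ yields (ii) and (iii). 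For $n=8$ the counts are $(c_m)_{m=0}^{7}=(2,4,0,0,2,0,0,0)$; using $\omega^{-4j}=(-1)^j$ one finds $\lambda_0=8$, $\lambda_4=0$, and the three conjugate pairs $\lambda_1=4e^{-i\pi/4}$, $\lambda_2=2^{5/2}e^{-i\pi/4}$, $\lambda_3=4e^{-i3\pi/4}$ with $\lambda_7=\overline{\lambda_1}$, $\lambda_6=\overline{\lambda_2}$, $\lambda_5=\overline{\lambda_3}$. Note that the three distinct moduli $4,\,2^{5/2},\,8$ of these eigenvalues produce, after raising to the $k$-th power, exactly the prefactors $2^{2k}$, $2^{5k/2}$ and $2^{3k}$ that appear (divided by $8$) in formulas (iv)--(vii).

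The hard part will be the final simplification for $n=8$: the inverse-transform sum $\tfrac18\sum_{j=0}^{7}\lambda_j^{\,k}\omega^{j\lambda}$ now has eight terms spread over all eighth roots of unity, and turning it into the specific combination of sines and cosines displayed in (iv)--(vii) requires carrying out the conjugate-pairing and the angle-addition identities separately for each residue $\lambda\in\{1,3,5,7\}$. This is entirely routine trigonometry — repeated use of $e^{i\alpha}+e^{-i\alpha}=2\cos\alpha$, $\cos(\tfrac{\pi}{2}-x)=\sin x$, and $-\cos(\theta+\tfrac{\pi}{4})=\cos(\theta-\tfrac{3\pi}{4})$ — but it is bookkeeping-intensive and is where essentially all the computational effort, and all the risk of sign errors, is concentrated.
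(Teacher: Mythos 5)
Your proposal is correct, and it departs from the paper's proof at the decisive step in a genuinely different (and cleaner) way. The paper also starts from the recurrence $R_k(n)=M(n)\cdot R_{k-1}(n)$ and the eigenvalues of $M(2)$, $M(4)$, $M(8)$, but it then proceeds by undetermined coefficients: for each modulus and residue it posits $\rho_{k,\lambda}(n)=\sum_i C_i\lambda_i^k$, computes initial values (e.g.\ $\rho_{1,1}(4)=2$, $\rho_{2,1}(4)=8$, $\rho_{3,1}(4)=24$), solves the resulting linear system, and delegates the final simplification to Mathematica's ComplexExpand command. You instead exploit the circulant structure: since the first column of $M(n)$ is $R_1(n)$, one has $R_k(n)=M(n)^k e_0$, and DFT diagonalization yields the explicit master formula $\rho_{k,\lambda}(n)=\frac1n\sum_{j=0}^{n-1}\lambda_j^k\,\omega^{j\lambda}$ with $\lambda_j=\sum_m \rho_{1,m}(n)\,\omega^{-jm}$, so the coefficients come for free, uniformly in $\lambda$ and in the modulus, with no initial conditions to fit. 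Note that each $\lambda_j$ is a quadratic Gauss sum $\sum_{x \bmod n}\omega^{-jx^2}$, so your derivation is in substance the Gauss-sum method that the paper only mentions (citing other work) at the end of Section 2 as an alternative approach. Your spectral data check out against the paper's matrices, and I verified that your conjugate-pairing does reproduce the stated formulas: e.g.\ for $\lambda=1$, $n=8$ it gives $2^{2k-3}\bigl(2^k+2^{k/2+1}\sin(\tfrac{\pi k}{4})+2\cos(\tfrac{\pi(k-1)}{4})+2\cos(\tfrac{3\pi(k-1)}{4})\bigr)$, which equals item (iv) because $\cos(\tfrac{\pi(k-1)}{4})=\sin(\tfrac{\pi(k+1)}{4})$ and $\cos(\tfrac{3\pi(k-1)}{4})=-\cos(\tfrac{\pi(3k+1)}{4})$. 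The trigonometric bookkeeping you leave unfinished is precisely the part the published proof also defers to a computer algebra system, so your argument is no less complete than the paper's, and structurally more transparent.
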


\begin{proof}
First of all, observe that
\begin{equation*}
M(2)=\begin{pmatrix} 1&1\\ 1&1\end{pmatrix},\ M(4)=\begin{pmatrix} 2 & 0 & 0 & 2 \\
 2 & 2 & 0 & 0 \\
 0 & 2 & 2 & 0 \\
 0 & 0 & 2 & 2 \\\end{pmatrix},\ M(8)=\begin{pmatrix}
 2 & 0 & 0 & 0 & 2 & 0 & 0 & 4 \\
 4 & 2 & 0 & 0 & 0 & 2 & 0 & 0 \\
 0 & 4 & 2 & 0 & 0 & 0 & 2 & 0 \\
 0 & 0 & 4 & 2 & 0 & 0 & 0 & 2 \\
 2 & 0 & 0 & 4 & 2 & 0 & 0 & 0 \\
 0 & 2 & 0 & 0 & 4 & 2 & 0 & 0 \\
 0 & 0 & 2 & 0 & 0 & 4 & 2 & 0 \\
 0 & 0 & 0 & 2 & 0 & 0 & 4 & 2 \\
\end{pmatrix}.
\end{equation*}

Let us compute ii). We know that $R_k(4)=M(4)\cdot R_{k-1}(4)$. Hence, since the eigenvalues of $M(4)$ are $\{4,2+2 i,2-2 i,0\}$, we know that
\begin{equation*}
\rho_{k,1}(4)=C_1 4^k+C_2 (2+2i)^k+C_3(2-2i)^k.
\end{equation*}

In order to compute $C_1$, $C_2$ and $C_3$ it is enough to observe that $\rho_{1,1}(4)=2$, $\rho_{2,1}(4)=8$ and $\rho_{3,1}(4)=24$. Hence
\begin{align*}&4 C_1 + (2 + 2 i) C_2 + (2 - 2 i) C_3 = 2,\\ &16 C_1 + 8 i C_2 - 8 i C_3 = 8,\\ &64 C_1 - (16 - 16 i) C_2 - (16 + 16 i)
C_3 = 24. \end{align*}

We deduce
\begin{equation*}
\rho_{k,1}(4)=\frac{1}{4} \left( 4^k- i (2+2 i)^k+i (2-2 i)^k\right)=2^{2 k-2}+2^{\frac{3 k}{2}-1} \sin \left(\frac{\pi  k}{4}\right),
\end{equation*}
as claimed.

To compute the other cases note that the eigenvalues of $M(2)$ are $\{0,2\}$ while the eigenvalues of $M(8)$ are
$$\left\{8,4+4 i,4-4 i,\sqrt{2}(-2-2 i),\sqrt{2}(2+2 i),\sqrt{2}(-2+2 i),\sqrt{2}(2-2 i),0\right\}.$$ Thus, in each case we only
need to compute the corresponding initial conditions and constants. The final results have been obtained with the help of
Mathematica ``ComplexExpand'' command.
\end{proof}

Note that a different approach to compute the values $\rho_{k,\lambda}(n)$, using the Gauss quadratic sum was given in \cite{Toth14}.

%%%%%%%%%%%%%%%%%%%%%%%%%%%%%%%%%%%%%%%%%%%%%%%%%%%%%%%%%%%%%%%%%%%%%%%%%%%%%%%%%%%%%%%%%%%%%%%%%%%%%%%%%%%%%%%%%%%%%%%%%%%%%%%%%%%%%%%%
%%%%%%%%%%%%%%%%%%%%%%%%%%%%%%%%%%%%%%%%%%%%%%%%%%%%%%%%%%%%%%%%%%%%%%%%%%%%%%%%%%%%%%%%%%%%%%%%%%%%%%%%%%%%%%%%%%%%%%%%%%%%%%%%%%%%%%%%

\section{Counting invertible sums of squares (mod $n$)}
\label{S3}

Given positive integers $k,n$, this section is devoted to computing $\Phi_k(n)$, defined  by \eqref{def_phi_k}. Let $A(k,\lambda,n)$ denote the
set of solutions $(x_1,\dots,x_k)\in (\Z/n\Z)^k$ of the congruence $x_1^2+\ldots+x_k^2\equiv\lambda\pmod{n}$. First of all, let us define the set
\begin{equation*}
\mathcal{A}_k(n):=\bigcup_{\substack{1\leq \lambda\leq n\\ \gcd(\lambda,n)=1}} A(k,\lambda,n).
\end{equation*}

Hence, $\Phi_k(n)=\textrm{card}\ \mathcal{A}_k(n)$ and, since the union is clearly disjoint, it follows that
\begin{equation*}
\Phi_k(n)=\sum_{\substack{1\leq \lambda\leq n\\ \gcd(\lambda,n)=1}}\rho_{k,\lambda}(n).
\end{equation*}

The following result shows the multiplicativity of $\Phi_k$ for every positive $k$.

\begin{prop}
Let $k$ be a positive integer. Then $\Phi_k$ is multiplicative; i.e., $\Phi_k(mn)=\Phi_k(m)\Phi_k(n)$ for every
$m,n\in \N$ such that $\gcd(m,n)=1$.
\end{prop}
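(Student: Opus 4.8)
The plan is to prove multiplicativity of $\Phi_k$ by reducing it to the already-established multiplicativity of $\rho_{k,\lambda}$ together with the standard bijection coming from the Chinese Remainder Theorem (CRT). The key identity to exploit is the formula derived just above the statement, namely
\[
\Phi_k(n)=\sum_{\substack{1\leq \lambda\leq n\\ \gcd(\lambda,n)=1}}\rho_{k,\lambda}(n).
\]
So the task becomes showing that the right-hand side is multiplicative in $n$.

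First I would fix coprime $m,n$ and write every $\lambda$ coprime to $mn$ via CRT. The ring isomorphism $\Z/mn\Z\cong \Z/m\Z\times\Z/n\Z$ restricts to a bijection between residues $\lambda$ with $\gcd(\lambda,mn)=1$ and pairs $(\mu,\nu)$ with $\gcd(\mu,m)=1$ and $\gcd(\nu,n)=1$, where $\mu\equiv\lambda\pmod m$ and $\nu\equiv\lambda\pmod n$. Hence the summation index over $\lambda$ coprime to $mn$ factors as a product of summations over $\mu$ coprime to $m$ and $\nu$ coprime to $n$. The second ingredient is that $\rho_{k,\lambda}$ is multiplicative (stated at the start of Section~\ref{S2}); applied with this $\lambda\leftrightarrow(\mu,\nu)$ correspondence it gives $\rho_{k,\lambda}(mn)=\rho_{k,\mu}(m)\,\rho_{k,\nu}(n)$, since reducing $\lambda$ modulo $m$ and modulo $n$ yields exactly $\mu$ and $\nu$.

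Putting these together, I would compute
\[
\Phi_k(mn)=\sum_{\substack{\gcd(\lambda,mn)=1}}\rho_{k,\lambda}(mn)
=\sum_{\substack{\gcd(\mu,m)=1}}\ \sum_{\substack{\gcd(\nu,n)=1}}\rho_{k,\mu}(m)\,\rho_{k,\nu}(n),
\]
and then factor the double sum as a product of the two single sums, recognizing each factor as $\Phi_k(m)$ and $\Phi_k(n)$ respectively. This yields $\Phi_k(mn)=\Phi_k(m)\Phi_k(n)$ as desired.

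The main obstacle, such as it is, is purely bookkeeping: one must verify carefully that the CRT correspondence between $\lambda$ and $(\mu,\nu)$ respects both the coprimality condition (so the index sets match) and the congruence classes feeding into $\rho_{k,\cdot}$ (so that the multiplicativity of $\rho_{k,\lambda}$ applies with the correct parameters). Because $\rho_{k,\lambda}$ is multiplicative as a function of $n$ \emph{for fixed arguments reduced appropriately}, the only subtlety is confirming that the value $\rho_{k,\lambda}(mn)$ genuinely depends on $\lambda$ only through its residues mod $m$ and mod $n$; this is immediate from the definition \eqref{def_rho}, since the defining congruence mod $mn$ splits via CRT into independent congruences mod $m$ and mod $n$. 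Once this is spelled out, the factorization of the sum is routine.
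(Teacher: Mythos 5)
Your proof is correct, but it takes a genuinely different route from the paper's. The paper argues directly at the level of the solution sets $\mathcal{A}_k(n)$: it defines the explicit map
$F((a_1,\ldots,a_k),(b_1,\ldots,b_k))=(na_1+mb_1,\ldots,na_k+mb_k)$
from $\mathcal{A}_k(m)\times\mathcal{A}_k(n)$ to $\mathcal{A}_k(mn)$ and verifies by hand that it is well defined, surjective and injective, i.e., it constructs the (inverse of the) CRT isomorphism restricted to these sets. You instead work at the level of the counting functions: you use the identity $\Phi_k(n)=\sum_{\gcd(\lambda,n)=1}\rho_{k,\lambda}(n)$, the CRT multiplicativity of $\rho_{k,\lambda}$ asserted at the start of Section~\ref{S2} (noting, correctly and importantly, that $\rho_{k,\lambda}(m)$ depends on $\lambda$ only through its residue mod $m$), and the CRT bijection between units mod $mn$ and pairs of units mod $m$ and mod $n$, and then factor the resulting double sum. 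Your argument is shorter and more modular, since it reuses machinery the paper has already set up; the trade-off is that it rests on the multiplicativity of $\rho_{k,\lambda}$, which the paper states without detailed proof, whereas the paper's bijective argument is self-contained and exhibits concretely how solutions mod $m$ and mod $n$ glue to solutions mod $mn$. At bottom both proofs are the Chinese Remainder Theorem: yours applies it twice (once inside $\rho_{k,\lambda}$ on the $k$-tuples, once on the index set of $\lambda$'s), while the paper applies it once, directly to the $k$-tuples in $\mathcal{A}_k$.
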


\begin{proof}
Let us define a map $F: \mathcal{A}_k(m) \times \mathcal{A}_k(n) \longrightarrow \mathcal{A}_k(mn)$ by
\begin{equation*}
F((a_1,\ldots,a_k),(b_1,\ldots,b_k))=(na_1+mb_1,\dots,na_k+mb_k).
\end{equation*}

Note that if $(a_1,\ldots,a_k)\in \mathcal{A}_k(m)$, then $a_1^2+\ldots+a_k^2 \equiv \lambda_1 \pmod m$ for some $\lambda_1$ with
$\gcd(\lambda_1,m)=1$. In the same way, if $(b_1,\ldots,b_k) \in \mathcal{A}_k(n)$, then $b_1^2+\ldots+b_k^2 \equiv \lambda_2 \pmod n$
for some $\lambda_2$ with $\gcd(\lambda_2,n)=1$. Consequently,
\begin{align*}
(na_1+mb_1)^2+\ldots+(na_k+mb_k)^2&=n^2 (a_1^2+\ldots+a_k^2)+m^2(b_1^2+\ldots+b_k^2)\\& +2mn(b_1a_1+\ldots+b_ka_k)\equiv\\ &\equiv n^2\lambda_1+m^2\lambda_2\pmod {mn}.
\end{align*}

Since it is clear that $\gcd(n^2 \lambda_1+m^2 \lambda_2,mn)=1$, it follows that $(na_1+mb_1,\ldots,na_k+mb_k)\in\mathcal{A}_k(mn)$ and thus
$F$ is well-defined.

Now, let $(c_1,\dots,c_k) \in \mathcal{A}_k(mn)$. Then $c_1^2+\ldots+c_k^2 \equiv \lambda \pmod {mn}$ for some $\lambda$ such that
$\gcd(\lambda,mn)=1$. Let us define $a_i\equiv c_i\pmod{m}$ and $b_i\equiv c_i\pmod{n}$ for every $i=1,\dots,k$. It follows that $(a_1,\dots,a_k)\in\mathcal{A}_k(m)$, $(b_1,\dots,b_k)\in\mathcal{A}_k(n)$ and, moreover, $F((a_1,\dots,a_k),(b_1,\dots,b_k))=(c_1,\dots,c_k)$.
Hence, $F$ is surjective.

Finally, assume that $$(na_1+mb_1,\ldots,na_k+mb_k) \equiv (n\alpha_1+m\beta_1,\ldots,n\alpha_k+m\beta_k) \pmod {mn}$$ for some
$(a_1,\ldots,a_k), (\alpha_1,\ldots,\alpha_k) \in \mathcal{A}_k(m)$ and for some $(b_1,\dots,b_k),(\beta_1,\dots,\beta_k)\in \mathcal{A}_k(n)$.
Then, for every $i=1,\dots,k$ we have that $na_i+mb_i \equiv  n\alpha_i+m \beta_i \pmod {mn}$. From this, it follows that
$a_i\equiv\alpha_i\pmod{m}$ and that $b_i\equiv\beta_i\pmod{n}$ for every $i$ and hence $F$ is injective.

Thus, we have proved that $F$ is bijective and the result follows.
\end{proof}

Since we know that $\Phi_k$ is multiplicative, we just need to compute its values over prime-powers. We do so in the following result.

\begin{prop}
\label{PP}
Let $k,r$ be positive integers. Then
\begin{itemize}
\item[i)] $\Phi_k(2^r)=\varphi(2^{kr})=2^{kr-1}$.
\item[ii)] If $p$ is an odd prime,
$$ \Phi_k(p^r)= \varphi(p^{kr}) - (-1)^{k(p-1)/4} \varphi(p^{kr-k/2})= p^{kr-\frac{k}{2}-1}(p-1)\left(p^{k/2}-(-1)^{k(p-1)/4}\right).
$$
\end{itemize}
\end{prop}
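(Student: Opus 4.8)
The plan is to start from the additive formula $\Phi_k(n)=\sum_{1\le\lambda\le n,\ \gcd(\lambda,n)=1}\rho_{k,\lambda}(n)$ derived earlier in this section, and to evaluate its right-hand side at a prime power $n=p^r$ using the reduction lemmas of Section \ref{S2} together with Lebesgue's Proposition \ref{LEB}. The two parts are handled separately according to whether $p=2$ or $p$ is odd.

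For part i) I would bypass the explicit evaluations of Lemma \ref{PROP2} and argue combinatorially. A unit modulo $2^r$ is exactly an odd residue, so $\Phi_k(2^r)$ counts the tuples $(x_1,\dots,x_k)\in(\Z/2^r\Z)^k$ for which $x_1^2+\dots+x_k^2$ is odd; since $x_i^2$ is odd iff $x_i$ is odd, this happens iff an odd number of the coordinates are odd. There are $2^{r-1}$ odd and $2^{r-1}$ even residues modulo $2^r$, so the number of tuples with exactly $j$ odd coordinates is $\binom{k}{j}2^{(r-1)k}$, and summing over odd $j$ gives $2^{(r-1)k}\sum_{j\text{ odd}}\binom{k}{j}=2^{(r-1)k}2^{k-1}=2^{kr-1}=\varphi(2^{kr})$.

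For part ii) I would first reduce the exponent $r$ to $1$. By Lemma \ref{LEM:REDP} we have $\rho_{k,\lambda}(p^r)=p^{(r-1)(k-1)}\rho_{k,\lambda}(p)$ whenever $p\nmid\lambda$, and the right-hand side depends on $\lambda$ only through $\lambda\bmod p$. Since each nonzero residue class modulo $p$ is represented exactly $p^{r-1}$ times among $\{\lambda:1\le\lambda\le p^r,\ p\nmid\lambda\}$, summation yields
\begin{equation*}
\Phi_k(p^r)=p^{(r-1)(k-1)}\,p^{r-1}\sum_{a=1}^{p-1}\rho_{k,a}(p)=p^{(r-1)k}\,\Phi_k(p),
\end{equation*}
using $(r-1)(k-1)+(r-1)=(r-1)k$. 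It remains to evaluate $\Phi_k(p)=\sum_{a=1}^{p-1}\rho_{k,a}(p)$ via Proposition \ref{LEB}. When $k$ is odd, the $(p-1)/2$ quadratic residues contribute $p^{k-1}+t$ and the $(p-1)/2$ non-residues contribute $p^{k-1}-t$, so the terms in $t$ cancel and $\Phi_k(p)=(p-1)p^{k-1}=\varphi(p^k)$, whence $\Phi_k(p^r)=\varphi(p^{kr})$. When $k$ is even, every nonzero $a$ gives the same value $\rho_{k,a}(p)=p^{k-1}-\ell$ with $\ell=(-1)^{k(p-1)/4}p^{k/2-1}$, so $\Phi_k(p)=(p-1)(p^{k-1}-\ell)$; multiplying by $p^{(r-1)k}$ and collecting powers of $p$ gives the closed form in the statement.

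The computations are routine once these pieces are assembled; the only genuine care needed is in bookkeeping the half-integer exponents and the sign $(-1)^{k(p-1)/4}$ so that the even-$k$ answer is correctly packaged as $\varphi(p^{kr})-(-1)^{k(p-1)/4}\varphi(p^{kr-k/2})$, while for odd $k$ the correction term is absent and the value is simply $\varphi(p^{kr})$. The key structural step, and the one that makes the argument short, is the clean factorization $\Phi_k(p^r)=p^{(r-1)k}\Phi_k(p)$ furnished by Lemma \ref{LEM:REDP}, together with the cancellation of the quadratic-residue correction in the odd case.
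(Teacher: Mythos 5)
Your proof is correct. Part ii) takes essentially the paper's route: Lemma \ref{LEM:REDP}, plus the fact that each nonzero residue class mod $p$ occurs $p^{r-1}$ times among the admissible $\lambda$, gives $\Phi_k(p^r)=p^{(r-1)k}\sum_{a=1}^{p-1}\rho_{k,a}(p)$, and Proposition \ref{LEB} finishes (cancellation of the $\pm t$ terms over quadratic residues and non-residues for odd $k$, the uniform value $p^{k-1}-\ell$ for even $k$); the paper's own proof is the same argument, only terser. Part i), however, is genuinely different and more elementary than the paper's. The paper checks $r=1,2,3$ against the trigonometric formulas of Lemma \ref{PROP2} (whose proof diagonalizes the matrices $M(2)$, $M(4)$, $M(8)$), and then for $r>3$ applies the reduction Lemma \ref{LEM:RED2} and sums $\rho_{k,i}(8)$ over odd $i$ in blocks of eight, using $\sum_{1\le i\le 7,\,2\nmid i}\rho_{k,i}(8)=2^{3k-1}$. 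You instead observe that a unit modulo $2^r$ is exactly an odd residue, that $x^2\equiv x\pmod 2$, and hence that $x_1^2+\cdots+x_k^2$ is odd precisely when an odd number of coordinates are odd, so $\Phi_k(2^r)=2^{(r-1)k}\sum_{j\ \text{odd}}\binom{k}{j}=2^{kr-1}$. This two-line parity argument makes part i) independent of all the $p=2$ machinery of Section \ref{S2}; what the paper's heavier route buys is not needed for this proposition at all --- Lemmas \ref{LEM:RED2} and \ref{PROP2} are results of independent interest (they extend Lebesgue's theorem to the prime $2$), whereas your argument, while cleaner here, gives no information about the individual values $\rho_{k,\lambda}(2^s)$. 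Finally, your reading of the odd-$k$ case in ii), namely that the correction term is absent and $\Phi_k(p^r)=\varphi(p^{kr})$, is the right interpretation and is consistent with Theorem \ref{Theorem_1}, since for odd $k$ the exponent $k(p-1)/4$ appearing in the stated formula need not be an integer.
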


\begin{proof}
\
\begin{itemize}
\item[i)] If $r=1,2,3$ the result readily follows from Lemma \ref{PROP2} by a simple computation. Now, if $r>3$ we can apply Lemma \ref{LEM:RED2} to obtain that
\begin{align*}
\Phi_k(2^r)&=\sum_{\substack{1\leq i\leq 2^r\\ 2\nmid i}}\rho_{k,i}(2^r)=2^{(r-3)(k-1)}\sum_{\substack{1\leq i\leq 2^r\\ 2\nmid i}}\rho_{k,i}(8)&\\&=2^{(r-3)(k-1)}\sum_{j=0}^{2^{r-3}-1}\sum_{\substack{8j+1\leq i\leq 8(j+1)-1\\ 2\nmid i}}\rho_{k,i}(8)=\\&=2^{(r-3)(k-1)}2^{r-3}\sum_{\substack{1\leq i\leq 7\\ 2\nmid i}}\rho_{k,i}(8)=\\&=2^{(r-3)(k-1)}2^{r-3}2^{3k-1}=2^{rk-1}=\varphi(2^{kr}).
\end{align*}

\item[ii)] Due to Lemma \ref{LEM:REDP} it can be seen, as is the previous case, that
$$\Phi_k(p^r)=p^{k(r-1)}\sum_{i=1}^{p-1}\rho_{k,i}(p).$$
Thus, it is enough to apply Proposition \ref{LEB}.
\end{itemize}
\end{proof}

Finally, we summarize the previous work in the following result.

\begin{teor}
\label{Theorem_1}
Let $k$ be a positive integer. Then the function $\Phi_k$ is multiplicative and for every $n\in \N$,
\begin{equation*}
\Phi_k(n)=\begin{cases} n^{k-1} \varphi(n), & \textrm{if $k$ is odd}; \\
n^{k-1} \varphi(n) \displaystyle \prod_{\substack{p \mid n\\ p>2}}  \left(1-\frac {(-1)^{k(p-1)/4}}{p^{k/2}}\right), &
\textrm{if $k$ is even}.
\end{cases}
\end{equation*}
\end{teor}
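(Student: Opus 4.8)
The plan is to deduce the theorem by assembling the two preceding results: the multiplicativity of $\Phi_k$ and the prime-power evaluation in Proposition \ref{PP}. Since $\Phi_k$ is already known to be multiplicative, the left-hand side is a multiplicative function of $n$. I would first observe that the right-hand side is multiplicative as well: for $k$ odd it is the product $n^{k-1}\varphi(n)$ of the completely multiplicative function $n\mapsto n^{k-1}$ and the multiplicative function $\varphi$; for $k$ even it carries the additional factor $\prod_{p\mid n,\, p>2}\bigl(1-(-1)^{k(p-1)/4}p^{-k/2}\bigr)$, which, being a product ranging over the distinct prime divisors of $n$, is itself multiplicative. Hence in either parity both sides are multiplicative, and it suffices to verify the identity at prime powers $n=p^r$.

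I would then check equality at prime powers by direct comparison with Proposition \ref{PP}. For $p=2$ the right-hand side reduces to $2^{r(k-1)}\varphi(2^r)=2^{rk-1}$ (the odd-prime product being empty), which matches part i). For an odd prime $p$, using $\varphi(p^r)=p^{r-1}(p-1)$, the right-hand side becomes $p^{rk-1}(p-1)$ when $k$ is odd and $p^{rk-1}(p-1)\bigl(1-(-1)^{k(p-1)/4}p^{-k/2}\bigr)$ when $k$ is even; the former equals $\varphi(p^{kr})$ and the latter, after pulling out $p^{k/2}$, equals $(p-1)p^{kr-k/2-1}\bigl(p^{k/2}-(-1)^{k(p-1)/4}\bigr)$. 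These are precisely the expressions furnished by Proposition \ref{PP} in the two cases, so the prime-power identity holds and, by multiplicativity, so does the global one.

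Alternatively, and perhaps more transparently, I would argue multiplicatively from the outset: writing $n=\prod_p p^{r_p}$, multiplicativity gives $\Phi_k(n)=\prod_p\Phi_k(p^{r_p})$, and Proposition \ref{PP} lets me factor each local term as $\Phi_k(p^{r_p})=\varphi(p^{kr_p})\cdot c_p$, where $c_p=1$ for $p=2$ or $k$ odd, and $c_p=1-(-1)^{k(p-1)/4}p^{-k/2}$ for odd $p$ with $k$ even. Collecting the factors $\varphi(p^{kr_p})$ and invoking the elementary identity $\varphi(n^k)=n^{k-1}\varphi(n)$, which holds because $n^k$ and $n$ share the same prime divisors so that $\varphi(n^k)=n^k\prod_{p\mid n}(1-p^{-1})=n^{k-1}\varphi(n)$, then yields $\Phi_k(n)=n^{k-1}\varphi(n)\prod_p c_p$, which is exactly the claimed formula in each parity of $k$.

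There is no serious obstacle here; the substantive work was done in Propositions \ref{LEB} and \ref{PP}, and what remains is careful bookkeeping. The one point demanding attention is the collapse of the odd-$k$ case: one must note that the correction factor disappears because, by Lebesgue's Proposition \ref{LEB}, the values $\rho_{k,\lambda}(p)$ for $\lambda$ a quadratic residue and a non-residue differ by $\pm t$ and hence cancel in pairs when summed over a reduced residue system, leaving $\sum_{\lambda}\rho_{k,\lambda}(p)=(p-1)p^{k-1}$ with no surviving $p^{k/2}$ term. Equally, I would keep track that the exponent $k/2$ appearing in the even-$k$ formula is genuinely an integer, so that the factorization $p^{k/2}-(-1)^{k(p-1)/4}$ and the identity $\varphi(n^k)=n^{k-1}\varphi(n)$ combine without ambiguity.
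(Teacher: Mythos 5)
Your proposal is correct and takes essentially the same approach as the paper, which gives no separate argument for Theorem \ref{Theorem_1} at all: it is stated as a summary of the multiplicativity proposition together with Proposition \ref{PP}, exactly the assembly you carry out. Your explicit handling of the odd-$k$ collapse via the quadratic-residue cancellation in Proposition \ref{LEB} is a worthwhile extra precaution, since the formula in Proposition \ref{PP}, part ii), only makes literal sense (integer exponents, real sign factor) when $k$ is even.
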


Written more explicitly, we deduce that
\begin{equation*}
\Phi_k(n)=\begin{cases} n^{k-1} \varphi(n), & \textrm{if $k$ is odd}; \\ n^{k-1} \varphi(n)
\displaystyle
\prod_{\substack{p \mid n\\ p>2}}  \left(1-\frac {1}{p^{k/2}}\right), & \text{if $k\equiv 0 \, \pmod{4}$}; \\
n^{k-1} \varphi(n)
\displaystyle
\prod_{\substack{p \mid n\\ p \equiv 1 \text{\rm (mod $4$)}}}  \left(1-\frac {1}{p^{k/2}}\right)
\displaystyle
\prod_{\substack{p \mid n\\ p \equiv -1 \text{\rm (mod $4$)}}}  \left(1+\frac {1}{p^{k/2}}\right), &
\text{if $k\equiv 2 \, \pmod{4}$}.
\end{cases}
\end{equation*}

When $k$ is a multiple of $4$, $\Phi_k$ is closely related to $\mathbf{J}_{k/2}$. The following result, which follows from Theorem \ref{Theorem_1}
and the definition of Jordan's totient function $\mathbf{J}_{k}$ makes this relation explicit.

\begin{cor}
Let $k\in \N$ be a multiple of $4$ and let $n\in \N$. Then,
\begin{equation*}
\Phi_k(n)= n^{k/2-1} \mathbf{J}_{k/2}(n) \varphi(n) \frac{2^{k/2}}{2^{k/2}-1+n\text{\rm (mod $2$)}}.
\end{equation*}

Moreover, if $k/4$ is odd, then we have
\begin{equation*}
\frac{\Phi_{k}(n)}{\Phi_{k/4}(n)}=n^{k/4} \mathbf{J}_{k/2}(n) \frac{2^{k/2}}{2^{k/2}-1+n\text{\rm (mod $2$)}}.
\end{equation*}
\end{cor}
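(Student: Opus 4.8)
The plan is to read off $\Phi_k(n)$ from Theorem~\ref{Theorem_1} in the relevant case and then massage it into the shape dictated by $\mathbf{J}_{k/2}$. Since $k$ is a multiple of $4$ we are in the case $k\equiv 0\pmod 4$, so Theorem~\ref{Theorem_1} gives
\begin{equation*}
\Phi_k(n)=n^{k-1}\varphi(n)\prod_{\substack{p\mid n\\ p>2}}\left(1-\frac{1}{p^{k/2}}\right).
\end{equation*}
On the other hand, the definition of Jordan's function yields $\mathbf{J}_{k/2}(n)=n^{k/2}\prod_{p\mid n}\left(1-p^{-k/2}\right)$, where the product now runs over \emph{all} primes dividing $n$. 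The whole task is therefore to reconcile the product over odd primes above with the full product defining $\mathbf{J}_{k/2}$, i.e.\ to account for the single discrepant Euler factor coming from $p=2$.

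First I would split according to the parity of $n$. If $n$ is odd, then $2\nmid n$ and the two products coincide, so $\prod_{p\mid n,\,p>2}(1-p^{-k/2})=\mathbf{J}_{k/2}(n)/n^{k/2}$; substituting this gives $\Phi_k(n)=n^{k/2-1}\varphi(n)\mathbf{J}_{k/2}(n)$. If $n$ is even, the full product carries the extra factor $1-2^{-k/2}=(2^{k/2}-1)/2^{k/2}$, which is absent from the restricted product; hence the restricted product equals $\mathbf{J}_{k/2}(n)/n^{k/2}$ times $2^{k/2}/(2^{k/2}-1)$, and substitution gives $\Phi_k(n)=n^{k/2-1}\varphi(n)\mathbf{J}_{k/2}(n)\cdot 2^{k/2}/(2^{k/2}-1)$. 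The two outputs are then unified by noting that $2^{k/2}-1+(n\bmod 2)$ equals $2^{k/2}$ when $n$ is odd and $2^{k/2}-1$ when $n$ is even, which is precisely the claimed closed form.

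For the second identity I would use that, since $k$ is a multiple of $4$, the integer $k/4$ is well defined, and the hypothesis that it is odd places $\Phi_{k/4}$ in the odd case of Theorem~\ref{Theorem_1}, whence $\Phi_{k/4}(n)=n^{k/4-1}\varphi(n)$. Dividing the formula just established for $\Phi_k(n)$ by this expression, the factor $\varphi(n)$ cancels and the powers of $n$ combine as $n^{(k/2-1)-(k/4-1)}=n^{k/4}$, leaving
\begin{equation*}
\frac{\Phi_k(n)}{\Phi_{k/4}(n)}=n^{k/4}\,\mathbf{J}_{k/2}(n)\,\frac{2^{k/2}}{2^{k/2}-1+(n\bmod 2)},
\end{equation*}
as required.

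The argument is essentially bookkeeping, and the only genuine subtlety—the point I would watch most carefully—is the treatment of the prime $p=2$: keeping track of the one missing Euler factor and verifying that the piecewise value of $2^{k/2}-1+(n\bmod 2)$ reproduces both parity cases simultaneously. Everything else is routine cancellation of $\varphi(n)$ and of powers of $n$.
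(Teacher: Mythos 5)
Your proposal is correct and is exactly the argument the paper intends: the paper gives no separate proof, stating only that the corollary ``follows from Theorem \ref{Theorem_1} and the definition of Jordan's totient function,'' and your computation—substituting the $k\equiv 0 \pmod 4$ case of Theorem \ref{Theorem_1}, reconciling the missing Euler factor at $p=2$ via the parity of $n$, and cancelling $\varphi(n)$ against the odd case $\Phi_{k/4}(n)=n^{k/4-1}\varphi(n)$—is precisely that verification.
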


Recall that in the case $k=4$, $\Phi_4(n)$ is the number of units in the ring $\mathbb{H}(\mathbb{Z}/n\mathbb{Z})$. If, in addition, $n$
is odd then $\Phi_4(n)=n\mathbf{J}_{2}(n) \varphi(n)$ which is the well-known formula for the number of regular matrices in
the ring $\mathrm{M}_2(\mathbb{Z}/n\mathbb{Z})$. Of course, this is not a surprise since it is known that for an odd $n$ the
rings $\mathbb{H}(\mathbb{Z}/n\mathbb{Z})$ and $\mathrm{M}_2(\mathbb{Z}/n\mathbb{Z})$ are isomorphic (\cite{cel}).

Some elementary properties of $\Phi_k$, well known for Euler's function (the case $k=1$) follow at once by Theorem \ref{Theorem_1}.
For example, we have

\begin{cor} Let $k\in \N$ be fixed.

i) If $m,n\in \N$ such that $n\mid m$, then $\Phi_k(n) \mid \Phi_k(m)$.

ii) Let $m,n\in \N$ and let $d=\gcd(m,n)$. Then
$\Phi_k(mn)\Phi_k(d) = d^k \Phi_k(m)\Phi_k(m)$.

iii) If $n,m\in \N$, then $\Phi_k(n^m)=n^{km-k}\Phi_k(n)$.
\end{cor}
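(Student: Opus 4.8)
The plan is to obtain all three parts from the explicit prime-power values recorded in Proposition \ref{PP} together with the multiplicativity of $\Phi_k$ established above. The single fact that drives everything is a \emph{shift relation} on prime powers: for every prime $p$ and all integers $r\ge 1$, $t\ge 0$,
\begin{equation*}
\Phi_k(p^{r+t})=p^{kt}\,\Phi_k(p^r).
\end{equation*}
This is read off directly from Proposition \ref{PP}: in each of its three cases ($p=2$; odd $p$ with $k$ odd; odd $p$ with $k$ even) the dependence of $\Phi_k(p^r)$ on $r$ enters only through a single factor $p^{kr}$, all remaining factors being constant in $r$. Dividing the value at exponent $r+t$ by the value at exponent $r$ leaves exactly $p^{kt}$. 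I expect the extraction of this relation to be the only genuinely content-bearing step, and the one point deserving care is that it must hold uniformly \emph{from $r=1$ onward} (so that no separate treatment of small prime powers intrudes); this is guaranteed because Proposition \ref{PP} is stated for all positive exponents $r$. Once the shift relation is in hand, the three assertions are bookkeeping with the Chinese-Remainder multiplicativity.

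For iii) I would first reduce to a prime power: writing $n=\prod_p p^{a_p}$, multiplicativity gives $\Phi_k(n^m)=\prod_p \Phi_k(p^{m a_p})$. For each $p$ with $a_p\ge 1$, the shift relation with $r=a_p$ and $t=(m-1)a_p$ yields $\Phi_k(p^{m a_p})=p^{k(m-1)a_p}\,\Phi_k(p^{a_p})$. Multiplying over $p$ and recognizing $\prod_p p^{k(m-1)a_p}=n^{k(m-1)}=n^{km-k}$ gives $\Phi_k(n^m)=n^{km-k}\Phi_k(n)$; the case $n=1$ is trivial since $\Phi_k(1)=1$.

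For i), multiplicativity again lets me argue one prime at a time. If $n\mid m$ then $v_p(n)\le v_p(m)$ for every $p$, and for the primes dividing $n$ the shift relation gives $\Phi_k(p^{v_p(m)})=p^{k(v_p(m)-v_p(n))}\,\Phi_k(p^{v_p(n)})$, an integer multiple of $\Phi_k(p^{v_p(n)})$. Primes dividing $m$ but not $n$ contribute only further positive integer factors to $\Phi_k(m)$. Taking the product over all primes exhibits $\Phi_k(m)$ as an integer multiple of $\Phi_k(n)$.

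Finally, for ii) (whose right-hand side should read $d^k\Phi_k(m)\Phi_k(n)$) I would note that both sides factor over primes, using multiplicativity of $\Phi_k$ and of $d^k=\prod_p p^{k v_p(d)}$, so the identity reduces to a single-prime statement. Fixing $p$ and setting $a=v_p(m)$, $b=v_p(n)$, so that $v_p(mn)=a+b$ and $v_p(d)=\min(a,b)$, I must verify
\begin{equation*}
\Phi_k(p^{a+b})\,\Phi_k(p^{\min(a,b)})=p^{k\min(a,b)}\,\Phi_k(p^{a})\,\Phi_k(p^{b}).
\end{equation*}
Assuming $a\le b$ without loss of generality and cancelling the nonzero common factor $\Phi_k(p^{a})$, this is exactly $\Phi_k(p^{a+b})=p^{ka}\Phi_k(p^{b})$, which is the shift relation with $r=b$, $t=a$ (the subcases $a=0$ or $b=0$ being immediate from $\Phi_k(1)=1$). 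Reassembling the prime factors completes the proof.
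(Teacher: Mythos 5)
Your proposal is correct and takes essentially the same route as the paper: the paper gives no written proof, asserting only that the corollary follows at once from Theorem~\ref{Theorem_1}, and your shift relation $\Phi_k(p^{r+t})=p^{kt}\,\Phi_k(p^r)$ is exactly the content of that theorem (equivalently of Proposition~\ref{PP}), namely that $\Phi_k(p^r)$ equals $p^{kr-1}(p-1)$ times a factor independent of $r$, so your argument is just the careful write-up of the intended one. You were also right to flag and repair the typo in ii), whose right-hand side should read $d^k\Phi_k(m)\Phi_k(n)$.
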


%%%%%%%%%%%%%%%%%%%%%%%%%%%%%%%%%%%%%%%%%%%%%%%%%%%%%%%%%%%%%%%%%%%%%%%%%%%%%%%%%%%%%%%%%%%%%%%%%%%%%%%%%%%%%%%%%%%%%%%%%%%%%%%%%%%%%%%%
%%%%%%%%%%%%%%%%%%%%%%%%%%%%%%%%%%%%%%%%%%%%%%%%%%%%%%%%%%%%%%%%%%%%%%%%%%%%%%%%%%%%%%%%%%%%%%%%%%%%%%%%%%%%%%%%%%%%%%%%%%%%%%%%%%%%%%%%

\section{The average order of $\Phi_k(n)$}
\label{S4}

The average order of $\varphi(n)$ is well-known. Namely,
\begin{equation*}
\frac1{x} \sum_{n\leq x}\varphi(n) \sim \frac{3}{\pi^2} x \quad (x\to \infty),
\end{equation*}
see, for example, \cite[Th.\ 330]{asin}. In fact, the best known asymptotic formula is due to Walfisz \cite{wal}:
\begin{equation} \label{walfisz}
\sum_{n\leq x} \varphi(n) =  \frac{3}{\pi^2} x^2 +  O(x (\log x)^{2/3} (\log \log x)^{4/3}).
\end{equation}

We now generalize this result.

\begin{teor}
Let  $k\in \N$ be any fixed integer. Then
\begin{equation*}
\sum_{n\le x }\Phi_k(n)= \frac{C_k}{k+1} x^{k+1} + O(x^k R_k(x)),
\end{equation*}
where
\begin{equation*}
C_k= \frac{6}{\pi^2}, \quad R_k=(\log x)^{2/3} (\log \log x)^{4/3}, \quad \text{if $k$ is odd};
\end{equation*}
\begin{equation*}
C_k=\frac{3}{4}\prod_{p>2} \left(1- \frac1{p^2} - \frac{(-1)^{k(p-1)/4}(p-1)}{p^{k/2+2}}\right), \quad R_k(x)=\log x, \quad
\text{if $k$ is even}.
\end{equation*}
\end{teor}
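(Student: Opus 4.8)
The plan is to handle the two parities of $k$ separately, in each case using Theorem \ref{Theorem_1} to reduce $\Phi_k$ to an explicit multiplicative function. For odd $k$ the theorem gives $\Phi_k(n)=n^{k-1}\varphi(n)$, so I would obtain the statement by partial summation against Walfisz's estimate \eqref{walfisz}. Writing $\Phi(t)=\sum_{n\le t}\varphi(n)=\tfrac3{\pi^2}t^2+E(t)$ with $E(t)\ll t(\log t)^{2/3}(\log\log t)^{4/3}$, Abel summation yields $\sum_{n\le x}n^{k-1}\varphi(n)=x^{k-1}\Phi(x)-(k-1)\int_1^x t^{k-2}\Phi(t)\,dt$. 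The two contributions of the main term $\tfrac3{\pi^2}t^2$ combine to $\tfrac3{\pi^2}x^{k+1}\bigl(1-\tfrac{k-1}{k+1}\bigr)=\tfrac{6}{\pi^2(k+1)}x^{k+1}$, giving $C_k=6/\pi^2$. The two contributions of $E$, namely $x^{k-1}E(x)$ and $(k-1)\int_1^x t^{k-2}E(t)\,dt$, are each $\ll x^k(\log x)^{2/3}(\log\log x)^{4/3}$, the integral by the standard estimate for a power times a slowly varying factor (since $t^{k-2}E(t)\ll t^{k-1}(\log t)^{2/3}(\log\log t)^{4/3}$). This settles the odd case.

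For even $k$, Theorem \ref{Theorem_1} gives $\Phi_k(n)=n^k c(n)$, where $c(n)=\prod_{p\mid n}(1-1/p)\prod_{p\mid n,\,p>2}\bigl(1-(-1)^{k(p-1)/4}p^{-k/2}\bigr)$ is multiplicative and depends only on the radical of $n$. I would set $b=\mu * c$, a multiplicative function supported on squarefree integers with $b(p)=c(p)-1$ and $c=b * \mathbf{1}$. Interchanging the order of summation gives $\sum_{n\le x}\Phi_k(n)=\sum_{d\le x}b(d)d^k\sum_{m\le x/d}m^k$, and inserting $\sum_{m\le y}m^k=\tfrac{y^{k+1}}{k+1}+O(y^k)$ produces the main term $\tfrac{x^{k+1}}{k+1}\sum_{d\le x}b(d)/d$ together with an error $O\bigl(x^k\sum_{d\le x}|b(d)|\bigr)$. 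The series $\sum_d b(d)/d=\prod_p\bigl(1+b(p)/p\bigr)$ converges absolutely, and computing its Euler factors ($3/4$ at $p=2$ and $1-p^{-2}-(-1)^{k(p-1)/4}(p-1)p^{-k/2-2}$ at odd $p$) identifies it exactly with the stated constant $C_k$.

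The remaining work, and the genuine obstacle, is to show $\sum_{d\le x}|b(d)|\ll\log x$. I would bound $\sum_{d\le x}|b(d)|\le\prod_{p\le x}\bigl(1+|b(p)|\bigr)\ll\exp\bigl(\sum_{p\le x}|b(p)|\bigr)$ and prove $\sum_{p\le x}|b(p)|=\log\log x+O(1)$. Since $|b(p)|=\bigl|p^{-1}+(-1)^{k(p-1)/4}(1-1/p)p^{-k/2}\bigr|$, for $k\ge4$ this is immediate from $\sum_p p^{-k/2}<\infty$; for $k=2$ it is the delicate point, because there $|b(p)|\sim 2/p$ when $p\equiv1\pmod 4$ (the two terms reinforce) but $|b(p)|\ll p^{-2}$ when $p\equiv3\pmod 4$ (they cancel), so one needs the prime number theorem in arithmetic progressions to conclude $\sum_{p\le x}|b(p)|=2\cdot\tfrac12\log\log x+O(1)$. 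Granting $\sum_{d\le x}|b(d)|\ll\log x$, partial summation gives the tail bound $\sum_{d>x}|b(d)|/d\ll(\log x)/x$, so completing the series in the main term costs $O(x^k\log x)$ and the error term is likewise $O(x^k\log x)$, yielding $R_k(x)=\log x$; all other steps are routine.
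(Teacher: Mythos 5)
Your proposal is correct and follows essentially the same route as the paper: your function $b$ is just the paper's convolution factor rescaled, since the paper writes $\Phi_k=\id_k*g_k$ with $g_k(d)=d^k b(d)$, and your main-term/error decomposition, Euler-product identification of $C_k$, and Mertens-type bound $\sum_{d\le x}|b(d)|\ll\log x$ all mirror the paper's steps. Even the delicate $k=2$ point is the same ingredient in different clothing: your appeal to Mertens in arithmetic progressions (PNT in APs is not actually needed) to get $\sum_{p\le x}|b(p)|=\log\log x+O(1)$ is equivalent to the paper's use of Uchiyama's estimate $\prod_{p\le x,\,p\equiv 1\ (\mathrm{mod}\ 4)}(1-1/p)\sim c(\log x)^{-1/2}$.
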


\begin{proof} If $k$ is odd, then this result follows easily by partial summation from the fact that $\Phi_k(n)=n^{k-1} \varphi (n)$ using
Walfisz' formula \eqref{walfisz}.

Assume now that $k\in \N$ is even. Since the function $\phi_k$ is multiplicative, we deduce by the Euler product formula that
\begin{equation*}
\sum_{n=1}^{\infty} \frac{\Phi_k(n)}{n^s} = \zeta(s-k) G_k(s),
\end{equation*}
where
\begin{equation*}
G_k(s)= \left(1-\frac1{2^{s-k+1}}\right) \prod_{p>2} \left(1- \frac1{p^{s-k+1}}
- \frac{(-1)^{k(p-1)/4}(p-1)}{p^{s-k/2+1}} \right)
\end{equation*}
is absolutely convergent for $\Re s> k$. This shows that $\Phi_k=\id_k*g_k$ in terms of the Dirichlet convolution, where
$\id_k(n)=n^k$ ($n\in \N$) and the multiplicative function
$g_k$ is defined by
\begin{equation*}
g_k(p^r)= \begin{cases} -2^{k-1}, & \text{if $p=2$, $r=1$}; \\
-p^{k-1} - (-1)^{k(p-1)/4}p^{k/2-1}(p-1) , & \text{if $p>2$, $r=1$}; \\ 0, & \text{otherwise}.
\end{cases}
\end{equation*}

We obtain
\begin{equation*}
\sum_{n\le x} \Phi_k(n)=  \sum_{de\le x} g_k(d)e^k = \sum_{d\le x} g_k(d) \sum_{e\le x} e^k = \sum_{d\le x} g_k(d) \left(\frac{(x/d)^{k+1}}{k+1}
+O((x/d)^k) \right)
\end{equation*}
\begin{equation} \label{error}
=  \frac{x^{k+1}}{k+1}  G_k(k+1) + O\left(x^{k+1} \sum_{d>x} \frac{|g_k(d)|}{d^{k+1}}\right) +
O\left(x^k \sum_{d\le x} \frac{|g_k(d)|}{d^k}\right).
\end{equation}

Here for every $k\ge 4$ we have
\begin{equation*}
\sum_{d\le x} \frac{|g_k(d)|}{d^k} \le \prod_{p\le x}
\sum_{r=0}^{\infty} \frac{|g_k(p^r)|}{p^{kr}} = \prod_{p\le x}
\left(1+ \frac{|g_k(p)|}{p^k}\right) \ll \prod_{p\le x} \left(1+
\frac{p^{k-1}+p^{k/2-1}+p^{k/2}}{p^k} \right)
\end{equation*}
\begin{equation*}
<  \prod_{p\le x} \sum_{r=0}^{\infty} \frac1{p^r} =  \prod_{p\le x} \left(1-\frac1{p}\right)^{-1} \ll \log x
\end{equation*}
by Mertens's theorem. In the case $k=2$ this gives
\begin{equation*}
\sum_{d\le x} \frac{|g_2(d)|}{d^2} \ll \prod_{\substack{p\le x\\ p\equiv 1 \text{(mod $4$)}}} \left(1+ \frac{2}{p}-\frac1{p^2} \right)
\prod_{\substack{p\le x\\ p\equiv -1 \text{(mod $4$)}}} \left(1+\frac1{p^2} \right)
\end{equation*}
\begin{equation*}
\ll  \prod_{\substack{p\le x\\ p\equiv 1 \text{(mod $4$)}}} \left(1+ \frac1{p}\right)^2 \ll \log x,
\end{equation*}
using that
\begin{equation*}
\prod_{\substack{p\le x\\ p\equiv 1 \text{(mod $4$)}}} \left(1- \frac1{p}\right) \sim c (\log x)^{-1/2},
\end{equation*}
with a certain constant $c$, cf. \cite{Uch1971}. Hence the last last error term of \eqref{error} is $O(x^k\log x)$ for every $k\ge 2$ even.

Furthermore, note that for every $k\ge 2$, $|g_k(n)|\le n^{k/2}
\sigma_{k/2-1}(n)$ ($n\in \N$), where $\sigma_t(n)=\sum_{d\mid n}
d^t$. Using that $\sigma_t(n)<\zeta(t)n^t$ for $t>1$ we conclude
that $|h_k(n)|\ll n^{k-1}$ for $k\ge 4$. Therefore,
\begin{equation*}
\sum_{d>x} \frac{|g_k(d)|}{d^{k+1}} \ll \sum_{d>x} \frac1{d^2} \ll \frac1{x}.
\end{equation*}

In the case $k=2$, using that $\sigma_1(n)\ll n\log n$ (this suffices) we have $h_2(n)\ll n^3\log n$ and
\begin{equation*}
\sum_{d>x} \frac{|g_2(d)|}{d^3} \ll \sum_{d>x} \frac{\log d}{d^2} \ll \frac{\log x}{x}.
\end{equation*}

Hence the first error term of \eqref{error} is $O(x^k)$ for $k\ge 4$ and it is $O(x^2\log x)$ for $k=2$. This completes the proof.
\end{proof}

\begin{cor} {\rm ($k=2,4$)}
\begin{equation*}
\sum_{n\le x }\Phi_2(n)= \frac{x^3}{4} \prod_{p\equiv 1 \text{\rm (mod $4$)}} \left(1-\frac{2}{p^2} +\frac1{p^3}\right) \prod_{p\equiv -1
\text{\rm (mod $4$)}} \left(1-\frac1{p^3}\right) + O(x^2 \log x),
\end{equation*}
\begin{equation*}
\sum_{n\le x }\Phi_4(n)= \frac{3x^5}{20} \prod_{p>2} \left(1-\frac1{p^2} -\frac1{p^3}+\frac1{p^4}\right) + O(x^4 \log x).
\end{equation*}
\end{cor}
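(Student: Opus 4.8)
The plan is to derive both formulas as direct specializations of the preceding Theorem, so the only real work is to evaluate the sign $(-1)^{k(p-1)/4}$ and simplify the resulting local Euler factors. Recall that for even $k$ the Theorem delivers
\begin{equation*}
\sum_{n\le x}\Phi_k(n) = \frac{C_k}{k+1}x^{k+1} + O(x^k\log x), \qquad C_k = \frac{3}{4}\prod_{p>2}\left(1-\frac{1}{p^2}-\frac{(-1)^{k(p-1)/4}(p-1)}{p^{k/2+2}}\right),
\end{equation*}
and everything below is obtained by setting $k=2$ and $k=4$ and regrouping.

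For $k=2$ I would first observe that $k(p-1)/4 = (p-1)/2$, so the sign equals $+1$ when $p\equiv 1\pmod 4$ and $-1$ when $p\equiv -1\pmod 4$. Splitting the product over odd primes according to the residue class mod $4$ and using $(p-1)/p^3 = 1/p^2 - 1/p^3$ converts the generic factor into
\begin{equation*}
1-\frac{2}{p^2}+\frac{1}{p^3}\quad (p\equiv 1), \qquad 1-\frac{1}{p^3}\quad (p\equiv -1).
\end{equation*}
Dividing $C_2$ by $k+1=3$ cancels the $3/4$ against $1/3$ to leave the prefactor $1/4$, which produces the first displayed formula; the error term $O(x^2\log x)$ is read off directly from $R_2(x)=\log x$.

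For $k=4$ the sign simplifies even further: $k(p-1)/4 = p-1$ is even for every odd prime, so $(-1)^{k(p-1)/4}=1$ uniformly, and the exponent is $k/2+2=4$. The generic factor then becomes
\begin{equation*}
1-\frac{1}{p^2}-\frac{p-1}{p^4} = 1-\frac{1}{p^2}-\frac{1}{p^3}+\frac{1}{p^4},
\end{equation*}
independent of the residue class, so no splitting is needed. Dividing $C_4$ by $k+1=5$ turns the $3/4$ into $3/20$, and $R_4(x)=\log x$ again supplies the error $O(x^4\log x)$.

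There is essentially no genuine obstacle here; the only places to be careful are the evaluation of $(-1)^{k(p-1)/4}$ in each residue class when $k=2$, and the elementary regrouping $\tfrac{p-1}{p^{k/2+2}} = \tfrac{1}{p^{k/2+1}} - \tfrac{1}{p^{k/2+2}}$ used to bring each factor into the stated polynomial-in-$1/p$ form.
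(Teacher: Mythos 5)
Your proposal is correct and matches the paper's (implicit) argument exactly: the corollary is a direct specialization of the preceding theorem at $k=2$ and $k=4$, and your evaluations of the sign $(-1)^{k(p-1)/4}$, the regrouping of $\frac{p-1}{p^{k/2+2}}$, and the prefactors $\frac{1}{4}$ and $\frac{3}{20}$ are all accurate. The paper gives no separate proof for this corollary, so there is nothing further to compare.
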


%%%%%%%%%%%%%%%%%%%%%%%%%%%%%%%%%%%%%%%%%%%%%%%%%%%%%%%%%%%%%%%%%%%%%%%%%%%%%%%%%%%%%%%%%%%%%%%%%%%%%%%%%%%%%%%%%%%%%%%%%%%%%%%%%%%%%%%%
%%%%%%%%%%%%%%%%%%%%%%%%%%%%%%%%%%%%%%%%%%%%%%%%%%%%%%%%%%%%%%%%%%%%%%%%%%%%%%%%%%%%%%%%%%%%%%%%%%%%%%%%%%%%%%%%%%%%%%%%%%%%%%%%%%%%%%%%

\section{Conclusions and further work}
\label{S5}

The generalization of $\varphi$ that we have presented in this paper is possibly one of the closest to the original idea which consists of counting
units in a ring. In addition, both the elementary and asymptotic properties of $\Phi_k$ extend those of $\varphi$ in a very natural way.
There are many other results regarding $\varphi$ that have not been considered here but that,
nevertheless, may have their extension to $\Phi_k$. For instance, in 1965 P. Kesava Menon \cite{menon} proved the following identity:
\begin{equation*}
\sum_{\substack{1\le j\le n \\ \gcd(j,n)=1}} \gcd(j-1,n)= \varphi(n)
d(n),
\end{equation*}
valid for every $n\in \N$, where $d(n)$ denotes the number of
divisors of $n$. This identity has been generalized in several ways.
See, for example \cite{hau1,hau2,tan,Toth2011}. Also,
\begin{equation*}
\sum_{\substack{1\le j\le n \\ \gcd(j,n)=1}} \gcd(j^2-1,n)=
\varphi(n)h(n),
\end{equation*}
where $h$ is a multiplicative function given explicitly in
\cite[Cor.\ 15]{Toth2011}. Our work suggests the following
generalization:

\begin{equation*}
\sum_{\substack{1\leq x_1,\ldots,x_k\leq n\\ \gcd(x_1^2+\ldots +
x_k^2,n)=1}} \gcd(x_1^2+\ldots +x_k^2-1,n)=\Phi_k(n) \Psi_k(n),
\end{equation*}
where $\Psi_k$ is a multiplicative function to be found.

Another question is on minimal order. As well known (\cite[Th.\ 328]{asin}), the minimal order of $\varphi(n)$ is $e^{-\gamma} n(\log \log n)^{-1}$,
where $\gamma$
is Euler's constant, that is
$$
\liminf_{n\to \infty} \frac{\varphi(n)\log\log n}{n}= e^{-\gamma}.
$$

It turns out by Theorem \ref{Theorem_1} that for every $k\in \N$ odd,
$$
\liminf_{n\to \infty} \frac{\Phi_k(n)\log\log n}{n^k}= e^{-\gamma}.
$$

Find the minimal order of $\Phi_k(n)$ in the case when $k$ is even.

%%%%%%%%%%%%%%%%%%%%%%%%%%%%%%%%%%%%%%%%%%%%%%%%%%%%%%%%%%%%%%%%%%%%%%%%%%%%%%%%%%%%%%%%%%%%%%%%%%%%%%%%%%%%%%%%%%%%%%%%%%%%%%%%%%%%%%%%
%%%%%%%%%%%%%%%%%%%%%%%%%%%%%%%%%%%%%%%%%%%%%%%%%%%%%%%%%%%%%%%%%%%%%%%%%%%%%%%%%%%%%%%%%%%%%%%%%%%%%%%%%%%%%%%%%%%%%%%%%%%%%%%%%%%%%%%%

\medskip

\noindent C. Calder\'{o}n \\
Departamento de Matem\'{a}ticas, Universidad del Pa\'{i}s Vasco \\
Facultad de Ciencia y Tecnolog\'{i}a \\
Barrio Sarriena, s/n, 48940 Leioa, Spain \\
{mtpcagac@lg.ehu.es}
\medskip

\noindent J. M. Grau \\
Departamento de Matem\'{a}ticas, Universidad de Oviedo \\
Avda. Calvo Sotelo, s/n, 33007 Oviedo, Spain \\
{grau@uniovi.es}
\medskip

\noindent A. M. Oller-Marc\'{e}n \\
Centro Universitario de la Defensa \\
Ctra. de Huesca, s/n, 50090 Zaragoza, Spain \\
{oller@unizar.es}
\medskip

\noindent L. T\'{o}th \\
Department of Mathematics, University of P\'{e}cs \\
Ifj\'us\'ag u. 6, H-7624 P\'ecs, Hungary \\
{ltoth@gamma.ttk.pte.hu}

\end{document}